\theoremstyle{plain}
\newtheorem{theorem}{\bf Theorem}[section]
\newtheorem{corollary}[theorem]{Corollary}
\newtheorem{lemma}[theorem]{Lemma}
\newtheorem{proposition}[theorem]{Proposition}
\newtheorem{question}[theorem]{Question}
\newtheorem*{theorem*}{Main Theorem}
\newtheorem*{theorem**}{Theorem}
\theoremstyle{definition}
\newtheorem{remark}[theorem]{Remark}
\newtheorem*{remark*}{Remark}
\newtheorem{definition}[theorem]{Definition}
\newtheorem*{definition*}{Definition}
\newtheorem{example}{\bf Example}
\newtheorem*{conjecture*}{Conjecture}
\newcommand{\NN}{\mathbb{N}}
\newcommand{\ZZ}{\mathbb{Z}}
\newcommand{\QQ}{\mathbb{Q}}
\newcommand{\PP}{\mathbb{P}}
\newcommand{\C}{\mathcal{C}}
\newcommand{\mm}{\mathfrak{m}}
\newcommand{\qq}{\mathfrak{q}}
\newcommand{\pp}{\mathfrak{p}}
\newcommand{\kk}{\Bbbk}
\newcommand{\lint}{\Big\lfloor}
\newcommand{\rint}{\Big\rfloor}
\newcommand{\diam}{\operatorname{diam} }
\newcommand{\Min}{\operatorname{Min} }
\newcommand{\height}{\operatorname{height} }
\newcommand{\Proj}{\operatorname{Proj} }
\newcommand{\init}{\mathrm{in}_{\prec}}
\newcommand{\reg}{\operatorname{reg} }
\definecolor{mypink}{RGB}{215, 5, 234}
\begin{document}
\author{
Michela Di Marca
\\
\small Dip. di Matematica\\ \small Università di Genova\\
\small Genova, Italy, \\
\footnotesize \url{dimarca@dima.unige.it}
\and
Matteo Varbaro
\\
\small Dip. di Matematica\\ \small Università di Genova\\
\small Genova, Italy, \\
\footnotesize \url{varbaro@dima.unige.it}
 }

\title{On the diameter of an ideal}
\date{}
\maketitle 

\begin{abstract}
We begin the study of the notion of diameter of an ideal $I\subset S=\kk[x_1,\ldots ,x_n]$, an invariant measuring the distance between the minimal primes of $I$. We provide large classes of Hirsch ideals, i.e. ideals satisfying $\diam(I)\leq \height(I)$, such as: quadratic radical ideals such that $S/I$ is Gorenstein and $\height(I)\leq 4$, or ideals admitting a square-free complete intersection initial ideal.
\end{abstract}

\section*{Introduction}

The dual graph is a classical tool introduced in different contexts, as in algebraic geometry or in combinatorics, in order to study intersection patterns of algebraic curves, or combinatorial properties of simplicial complexes. More in general, it is possible to define the concept of dual graph for ideals in a Noetherian commutative ring. Under different names, this natural notion has already been studied by several authors, such as \cite{Ha,HH,BV,SS}.

\vspace{1mm}

Let $S=\kk[x_1,\ldots,x_n]$ be a polynomial ring in $n$ variables over a field $\kk$. For an ideal $I\subset S$, let $\Min(I)=\{\pp_1,\ldots,\pp_s\}$ be the set of minimal primes of $I$. The \textit{dual graph} $G(I)$ of $I$ is the graph $G=([s],E)$, where $[s]:=\{1,2,\ldots,s\}$ and 
\[\{i,j\}\in E \Leftrightarrow \height(\pp_i+\pp_j)=\height(I)+1.\]
In this paper, we are particularly interested in studying the diameter of $G(I)$, that we will name the \textit{diameter of $I$} and denote by $\diam(I)$, in the homogeneous case. In general the diameter of $I$ can be infinite. If $\diam(I)<\infty$, i.e. if $G(I)$ is connected, then $I$ must be height-unmixed, that is $\height(\pp)=\height(I) \ \forall \ \pp\in \Min(I)$. In this case, the number of vertices $s$ is at most the multiplicity $e(S/I)$ of $S/I$. Since a connected graph has diameter less than the number of vertices, then $\diam(I)< e(S/I)$. Synthetically, so:
\[\diam(I)<\infty\implies \diam(I) < e(S/I).\]

From a result of Hartshorne in \cite{Ha}, it follows that, if $I\subset S$ is homogeneous and $S/I$ is Cohen-Macaulay, then $\diam(I)<\infty$. By the above discussion, therefore, in this case $\diam(I)< e(S/I)$. This upper bound can be significantly improved in good situations.

In this spirit, we say that an ideal $I\subset S$ is \textit{Hirsch} if $\diam(I)\leq \height(I)$. One cannot expect more: an easy ideal such as $I=(x_1y_1,\ldots ,x_ny_n)\subset \kk[x_i,y_i:i=1,\ldots ,n]$, which is a square-free monomial complete intersection, satisfies $\diam(I)=\height(I)$.

The name comes from a conjecture made by Hirsch in $1957$. He conjectured that if a simplicial complex $\Delta$ is the boundary of a convex polytope, then its Stanley-Reisner ideal $I_{\Delta}$ is Hirsch. The conjecture has been disproved by Santos in $\cite{Sa}$. 

However, under some additional hypotheses, there are some positive answers. For example, Adiprasito and Benedetti proved Hirsch's conjecture when $\Delta$ is flag in \cite{AB}. More generally, they proved that if $I$ is monomial, quadratic, and $S/I$ satisfies Serre's condition $(S_2)$, then $I$ is Hirsch. Starting from this, in \cite{BV} Benedetti and the second author made the following general conjecture:

\begin{conjecture*}\label{conj}
Let $I\subset S$ be a homogeneous ideal generated in degree $2$. If $S/I$ is Cohen-Macaulay, then $I$ is Hirsch.
\end{conjecture*}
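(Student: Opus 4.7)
The plan is to reduce the conjecture, via Gröbner degeneration, to the monomial quadratic Serre $(S_2)$ case already established by Adiprasito and Benedetti. The key technical step is a semicontinuity principle of the form: for a term order $\prec$ such that $\init(I)$ is square-free, one would like to have $\diam(I) \leq \diam(\init(I))$. Since height, Cohen--Macaulayness, and generation in degree two all pass from $I$ to a square-free initial ideal, $\init(I)$ would then be a monomial Cohen--Macaulay quadratic radical ideal, hence in particular $(S_2)$, and the Adiprasito--Benedetti theorem would give $\diam(\init(I))\leq \height(\init(I))$. Concatenating,
\[
\diam(I) \leq \diam(\init(I)) \leq \height(\init(I)) = \height(I),
\]
which is exactly the Hirsch conclusion.

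For the semicontinuity step, I would use the standard flat family interpolating $I$ and $\init(I)$, together with the well-known fact that when $\init(I)$ is radical there is a natural surjection from $\Min(\init(I))$ onto $\Min(I)$ (Kalkbrener--Sturmfels). The point is then to argue that this map preserves edges: if $\pp,\qq \in \Min(\init(I))$ satisfy $\height(\pp+\qq) = \height(I)+1$, then their images $\widetilde{\pp}, \widetilde{\qq} \in \Min(I)$ are either equal or satisfy $\height(\widetilde{\pp}+\widetilde{\qq}) = \height(I)+1$ as well. Granted this, any path in $G(\init(I))$ projects to a walk of no greater length in $G(I)$, yielding the desired diameter bound. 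The second sub-step is to show that, after a generic change of coordinates, every quadratic Cohen--Macaulay ideal admits a square-free quadratic initial ideal, which is closely related to open questions on Koszul Cohen--Macaulay algebras.

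The main obstacle is precisely this last sub-step: in full generality, one does not know whether every Cohen--Macaulay quadratic $I$ admits a square-free initial ideal, which is why the abstract records only the very special case of a square-free complete intersection initial ideal. Without such a degeneration one would have to work directly on $I$, for instance by induction on $\height(I)$ after quotienting by a general linear non-zero-divisor (which preserves Cohen--Macaulayness and quadratic generation), and relating paths in $G(I)$ to paths in the dual graph of the quotient. The difficulty is that cutting by a linear form may collapse minimal primes and alter the dual graph in a subtle way, so a purely inductive attack would require a robust mechanism to track these collapses; this is presumably why the conjecture is approached in the paper only on carefully chosen subclasses rather than in full generality.
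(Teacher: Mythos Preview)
The statement you are trying to prove is stated in the paper as an open \emph{conjecture}; there is no proof of it in the paper, only partial results in special cases. So the comparison is not against a proof but against the paper's explicit declaration that the problem is open. Your proposal is a proof \emph{strategy}, and you correctly identify one of its two main gaps yourself; but there are more problems than you acknowledge.

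First, the semicontinuity step $\diam(I)\le\diam(\init I)$ under the sole hypothesis that $\init I$ is square-free is itself an open question in the paper (stated as a Question immediately after Theorem~\ref{comp}). The edge-preservation argument you sketch---that adjacent minimal primes of $\init I$ map to adjacent or equal minimal primes of $I$---is exactly what the paper can prove only under the much stronger hypothesis that $\init(\bigcap_{i\in A}\pp_i)$ is radical for \emph{every} subset $A$ of $\Min(I)$, via Lemma~\ref{Knu}, which forces $\init(\pp_i+\pp_j)=\init\pp_i+\init\pp_j$. Kalkbrener--Sturmfels alone does not give you this equality of sums, and without it the height comparison breaks down.

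Second, your claim that Cohen--Macaulayness and generation in degree two ``pass from $I$ to a square-free initial ideal'' is false in the direction you need. The implication goes the other way: if $S/\init I$ is Cohen--Macaulay then so is $S/I$, but there are Cohen--Macaulay $S/I$ with non-Cohen--Macaulay (even non-$(S_2)$) square-free initial ideal. Likewise, a quadratic $I$ need not have a quadratic Gr\"obner basis, so $\init I$ need not be generated in degree two; this is precisely the distinction between quadratic and $G$-quadratic algebras. Hence even granting a square-free degeneration, you could not in general invoke Adiprasito--Benedetti on $\init I$.

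In short, your outline correctly isolates the degeneration idea that the paper exploits in the very restricted setting of Corollary~\ref{ci}, but each of its three ingredients (semicontinuity of the diameter, inheritance of Cohen--Macaulayness, inheritance of quadratic generation) fails or is open in the generality required, which is why the conjecture remains open.
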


Beyond monomial ideals, the conjecture is known to be true in other situations. For example, recently Bolognini, Macchia and Strazzanti checked it for binomial edge ideals of bipartite graphs in \cite{BMS}. Motivated by the above conjecture, in this paper we provide large classes of Hirsch ideals:

\begin{theorem*}
A radical homogeneous ideal $I\subset S$ is Hirsch in the following cases:
\begin{compactitem}
\item[{\rm i)}] If $S/I$ is Gorenstein and $\reg S/I\leq 3$.
\item[{\rm ii)}] If $S/I$ is Gorenstein and $I$ is an ideal generated in degree $2$ of height $\leq 4$.
\item[{\rm iii)}] If $S/I$ is Gorenstein and $I$ is an ideal generated in degree $2$ of height $5$ which is not a complete intersection.
\item[{\rm iv)}] If $\kk=\QQ$ or $\kk=\ZZ/p\ZZ$ and there is a term order $\prec$ on $S$ such that $\init I$ is a square-free complete intersection.
\item[{\rm v)}] If $I$ is a complete intersection of height $2$ having one product of linear forms among its minimal generators.
\end{compactitem} 
\end{theorem*}

Point v) explains why many examples provided in \cite{BDV} of complete intersection ideals defining line arrangements in $\PP^3$ are Hirsch. In \cite[Example D]{BDV}, however, was provided a nonHirsch complete intersection line arrangement in $\PP^3$; in particular, in v) the word ``minimal'' is crucial (since any ideal defining a subspace arrangement obviously contains a product of linear forms).

\vspace{1mm}

The structure of the paper is as follows: After recalling some basic definitions and facts from commutative algebra, in Section 1 we discuss the connectedness properties of a weighted graph, introducing the notion of $(r,w)$-connectedness and studying its influence on the diameter. The achieved results will be used together with techniques from \cite{BBV} in Section 2 to show Theorem \ref{h-vec} and Corollary \ref{smallcodimension}, corresponding to points i), ii) and iii) of the Main Theorem. Using different methods, we also prove Proposition \ref{union}, corresponding to the point v). In Section 3, we discuss the relationship between the diameters of an ideal and its initial ideal (with respect to some term order). While in general there is almost no relation, inspired by the work of Knutson we prove Theorem \ref{comp} and Corollary \ref{cf}, which says that $\diam(I)\leq \diam(\init I)$ for certain Frobenius splitting ideals. As a consequence, we get Corollary \ref{ci}, corresponding to point iv) of the Main Theorem.

\vspace{1mm}

\textit{Acknowledgements}: The authors wish to thank Bruno Benedetti for sharing with them many interesting conversations about Hirsch's conjecture.

\subsection*{Preliminaries}
In our setting, $S$ will be the polynomial ring in $n$ variables, $S=\kk[x_1,\ldots,x_n]$, where $\kk$ is a field, and $I\subset S$ will be a homogeneous ideal of height $c$.
The \textit{Hilbert series} of $R=S/I$ is given by the rational series
\[H_{R}(t):=\sum_{i\geq 0}(\dim_{\kk} R_i)t^i=\frac{h(t)}{(1-t)^d},\]
where $d$ is the Krull dimension of $R$ and $h(t)=h_0+h_1t+\ldots+h_rt^r$ is a polynomial with integer coefficients. The $h$-vector of $R$ is then $h=(h_0,h_1,\ldots,h_r)$.
The \textit{multiplicity} of $R$ is $e(R):=h(1)$. We have the following additive formula:
\begin{equation}
e(R)=\sum_{\substack {\pp \in \Min(I) \\ \height(\pp)=c}}\lambda(R_{\pp})e(S/\pp),
\end{equation}
where $\lambda(-)$ means length of an $S$-module. If $I$ is radical, the formula above gives
\begin{equation}
e(R)=\sum_{\substack {\pp \in \Min(I) \\ \height(\pp)=c}}e(S/\pp).
\end{equation}
If $ 0 \rightarrow F_{p} \rightarrow \ldots\rightarrow F_1\rightarrow F_0\rightarrow R\rightarrow 0$ is the minimal graded free resolution of $R=S/I$ as $S$-module, then
the \textit{(Castelnuovo-Mumford) regularity} of $R$ is
\[\reg R=\min\{j : F_i \mbox{ is generated in degrees }\leq i+j \mbox{ for all }i\}.\]
When $I=(f_1,\ldots,f_c)$ is a complete intersection ideal with $d_i=\deg(f_i)$ for $i=1,\ldots, c$, the Koszul complex gives that the regularity of $R=S/I$ is:
\[\reg R=d_1+\ldots+d_c-c.\]
Throughout the paper, by a \textit{graph} $G=(V,E)$ we mean a simple graph with a finite set of vertices $V\neq \emptyset$ and set of edges $E$. Sometimes, if $|V|=s$, we will assume without loss of generality that $V=\{1,\ldots,s\}=:[s]$.
A \textit{path} in $G$ is an ordered sequence of distinct vertices in $V$, $(v_1,\ldots, v_m)$, such that $\{v_i,v_{i+1}\}$ is an edge of $G$ for all $i=1,\ldots,m-1$. The \textit{length} of such a path is $m-1$. 
Two vertices $u,v \in V$ are \textit{connected} if there exists a path $(v_1,\ldots,v_m)$ in $G$ connecting them, i.e. such that $v_1=u$ and $v_m=v$. The \textit{distance} between $u$ and $v$ is the maximum length of a path connecting them if they are connected, 0 if $u=v$, and $\infty$ otherwise. The \textit{diameter} of $G$, denoted by $\diam(G)$, is the maximum distance between two vertices in $V$.
A set of $k$ vertices $X=\{v_1,\ldots,v_k\}\subset V$ is said to \textit{disconnect} $G$, if $|V|\geq k+2$ and by removing $X$ from $G$ there are two disconnected vertices in $G$.
\begin{definition*}
Given a nonnegative integer $l$, a graph $G$ is said to be \textit{$l$-connected} if it has at least $l$ vertices and it cannot be disconnected with a set of $l-1$ or less vertices. 
\end{definition*}

Notice that a graph $G=(V,E)$ is connected if and only if it is 1-connected, if and only if $\diam(G)<\infty$. Furthermore, if it is $l$-connected then it is $l'$-connected whenever $l'\leq l$. The following is classical:

\begin{theorem**}[Menger]\label{M1}
$G$ is $l$-connected on at least $l+1$ vertices if and only if for every couple of vertices in $G$ there are at least $l$ pairwise disjoint paths connecting them.
\end{theorem**}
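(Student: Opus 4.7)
The plan is to establish both directions separately. For ($\Leftarrow$), suppose that between every pair of vertices of $G$ there exist $l$ pairwise internally vertex-disjoint paths, and let $X\subset V(G)$ with $|X|\leq l-1$. If $X$ disconnected some pair $u',v'\in V(G)\setminus X$, then every $u'$-$v'$ path would meet $X$, so the $l$ internally disjoint $u'$-$v'$ paths would force $l$ distinct vertices into $X$, a contradiction. Combined with the hypothesis $|V(G)|\geq l+1$, this gives $l$-connectivity.

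For ($\Rightarrow$), I would first prove the \emph{local} Menger theorem: for any two non-adjacent vertices $u,v$ of a graph $H$, the maximum number of internally vertex-disjoint $u$-$v$ paths equals the minimum size of a set $X\subset V(H)\setminus\{u,v\}$ whose removal separates $u$ from $v$. The cleanest route is via the integer max-flow min-cut theorem applied to the auxiliary directed network obtained by splitting each internal vertex $w$ into $w^-,w^+$ joined by an arc of unit capacity and replacing each edge $\{a,b\}$ by a pair of arcs of capacity $+\infty$: integral $u$-$v$ flows of value $k$ correspond bijectively to families of $k$ internally vertex-disjoint $u$-$v$ paths, while integer minimum cuts correspond to minimum $u$-$v$ vertex separators in $H$.

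To finish, assume $G$ is $l$-connected with $|V(G)|\geq l+1$, and fix distinct $u,v\in V(G)$. If $u,v$ are non-adjacent, any $u$-$v$ vertex separator has size at least $l$ (else its removal would disconnect $G$, contradicting $l$-connectivity), and the local form supplies $l$ pairwise internally vertex-disjoint $u$-$v$ paths. The adjacent case will be the main obstacle: set $G':=G-\{uv\}$ and suppose for contradiction that some $X\subset V(G')\setminus\{u,v\}$ with $|X|\leq l-2$ separates $u$ from $v$ in $G'$. In $G$, the edge $\{u,v\}$ is then the only link across $X$; so if the component of $u$ in $G'-X$ contained any vertex other than $u$, that vertex would be separated from $v$ in $G$ by the set $X\cup\{u\}$ of size $\leq l-1$, contradicting $l$-connectivity, and analogously on the $v$-side. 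Hence both components are singletons, whence $V(G)=X\cup\{u,v\}$ has size $\leq l$, contradicting $|V(G)|\geq l+1$. Therefore the local form applied to $G'$ yields $l-1$ internally vertex-disjoint $u$-$v$ paths, and the edge $\{u,v\}$ completes them to the desired $l$ disjoint paths in $G$.
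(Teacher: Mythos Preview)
The paper does not give its own proof of Menger's theorem: it is quoted as a classical result (``The following is classical:'') and used only through the counting consequence \eqref{Mbound}. So there is no argument to compare against; your task reduces to whether the proposal stands on its own.

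Your argument is correct and follows one of the standard routes. The $(\Leftarrow)$ direction is the easy pigeonhole: if $u',v'\notin X$ and every $u'$--$v'$ path meets $X$, then each of the $l$ internally disjoint paths contributes a distinct internal vertex to $X$, forcing $|X|\geq l$. (If one of the paths is the single edge $\{u',v'\}$ it avoids $X$ outright, so $u',v'$ remain connected; either way there is no separation by $\leq l-1$ vertices.) For $(\Rightarrow)$ you invoke the local Menger equality via vertex-capacitated max-flow/min-cut, which is legitimate; the vertex-splitting gadget and integrality of optimal flows in unit-capacity networks are exactly what is needed. The reduction of the adjacent case to the non-adjacent one in $G'=G-\{u,v\}$ is handled carefully: a separator $X$ of size $\leq l-2$ in $G'$ would, together with $u$ (resp.\ $v$), separate any extra vertex in the $u$-side (resp.\ $v$-side) from $v$ (resp.\ $u$) in $G$, and any third component of $G'-X$ would already be separated from $v$ in $G$ by $X$ alone; hence $V(G)=X\cup\{u,v\}$ has at most $l$ elements, contradicting $|V(G)|\geq l+1$.

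In short: the paper omits the proof entirely, and your proposal supplies a complete and correct one via the max-flow/min-cut formulation of the local Menger theorem together with the standard edge-deletion trick for adjacent pairs.
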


\begin{remark*}
By counting the number of vertices contained in the $l$ paths of Menger's theorem, we have the following inequality for an $l$-connected graph $G$ on $s$ vertices:
\begin{equation}\label{Mbound}
\diam(G)\leq \lint\frac{s-2}{l}\rint +1.
\end{equation} 
\end{remark*}

\section{Combinatorics of $(r,w)$-connected graphs}

In this section we discuss some bounds on the diameter for weighted graphs that satisfy certain connectedness properties. First we introduce a weighted version of the notion of $l$-connectedness.
 
Suppose that the graph $G=(V,E)$ is endowed with a \textit{weight function}
\[w:V\rightarrow \NN^+,\]
and let us denote $w_i:=w(i)$ the \textit{weight} of the vertex $i \in V$.

\begin{definition}
Given a graph $G=(V,E)$ and a weight function $w$ on $V$, we say that $G$ is \textit{$(r,w)$-connected} if the removal of a set of vertices having sum of their weights at most $r-1$ does not disconnect $G$.
\end{definition}

Note that if $G$ is $(r,w)$-connected, then it is also $(r',w)$-connected for all $r'\leq r$.

\begin{remark}
With every graph $G$, we can associate the weight function $\mathbb{I}$ that gives to all the vertices weight $1$. Hence, saying that $G$ is $l$-connected is equivalent to say that $G$ is $(l,\mathbb{I})$-connected. In general, if $G$ is $l$-connected, then $G$ is $(l,w)$-connected for every weight function $w$.
\end{remark}

\begin{proposition}\label{bound}
Let $G=(V,E)$ be an $(r,w)$-connected graph with $\sum_{i\in V}w_i=e$. Then
\[
\diam(G)\leq \lint \frac{e-2}{r}\rint +1.
\]
\end{proposition}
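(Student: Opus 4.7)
The plan is to mimic the standard argument behind bound (1.1), replacing the appeal to Menger's theorem with a direct BFS-layer argument better adapted to weighted vertex cuts. I would set $D:=\diam(G)$ and dispose immediately of $D\leq 1$, for which the bound is clear (note $e\geq |V|\geq 2$ as soon as $D\geq 1$). Assuming $D\geq 2$, I pick $u,v\in V$ with $d(u,v)=D$ and form the BFS layers
\[
L_i\;:=\;\{x\in V:d(u,x)=i\}, \qquad 0\leq i\leq D.
\]
Since $(r,w)$-connectedness forces $G$ to be connected (the case $r\geq 1$ is the only one of interest), the $L_i$ partition $V$, with $u\in L_0$ and $v\in L_D$.

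The central step is to observe that for every $1\leq i\leq D-1$ the layer $L_i$ disconnects $G$ in the sense of the definition. Indeed, any $u$–$v$ path must meet $L_i$ because $d(u,\cdot)$ is $1$-Lipschitz along edges and moves from $0$ at $u$ to $D$ at $v$; and the technical requirement $|V|\geq |L_i|+2$ is immediate from $u,v\notin L_i$. Applying the $(r,w)$-connectedness hypothesis then gives $w(L_i):=\sum_{x\in L_i}w_x\geq r$ for each such $i$.

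Summing the weights over all layers and using $w_u,w_v\geq 1$ yields
\[
e\;=\;\sum_{i=0}^{D}w(L_i)\;\geq\;w_u+(D-1)r+w_v\;\geq\;2+(D-1)r,
\]
so $(D-1)r\leq e-2$. Since $D-1$ is a nonnegative integer, this upgrades to $D-1\leq \lint (e-2)/r\rint$, i.e. $D\leq \lint (e-2)/r\rint +1$, as required.

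I do not foresee any serious obstacle: the argument is essentially a one-page translation of the classical BFS-separator bound to the weighted setting. The only delicate point is checking the ``$|V|\geq|L_i|+2$'' clause in the definition of disconnection, and this is automatic from the endpoints being outside the interior layers. Notably, unlike the unweighted bound (1.1), no path-packing is required here.
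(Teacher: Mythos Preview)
Your proposal is correct and essentially identical to the paper's argument: the paper also fixes two vertices at maximal distance, forms the distance layers $V_i$, observes that each interior layer separates the endpoints and hence must carry weight at least $r$, and sums to obtain $e\geq (\diam(G)-1)r+2$. Your write-up is slightly more careful in explicitly verifying the ``$|V|\geq |L_i|+2$'' clause and in noting that $(r,w)$-connectedness with $r\geq 1$ already implies connectedness, but the underlying idea is the same BFS-layer argument.
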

\proof
Let $x$ and $y$ be two vertices having maximum distance in $G$, and let $V_i$ be the set of vertices having distance $i$ from $x$, for $i=0,\ldots,\diam(G)$. 
Note that the $V_i's$ are pairwise disjoint and, by construction, there are no edges from $V_i$ to $V_j$ with $|i-j|\geq 2$.

Hence, for $i\in \{1,\ldots,\diam(G)-1\}$, the removal of the set $V_i$ disconnects $x$ and $y$; it means that the sum of the weights of the vertices in $V_i$ is at least $r$.
So we have
\[e\geq \sum_{i=1}^{\diam(G)-1}\sum_{j\in V_i}w_j+w_x+w_y \geq (\diam(G)-1)r+2.\]
Since the diameter is an integer number, we get the thesis.
\endproof

Observe that, for $w=\mathbb{I}$, this bound recovers Menger's one \eqref{Mbound}.
Furthermore, this bound is the best possible that taking into account only $r$ and $e$:
\begin{example}
Fix two integers, $e$ and $r$, and consider the following connected graph $G$ on $e$ vertices: fix two vertices $x$ and $y$, and distribute $d-2$ vertices in $\lint \frac{e-2}{r}\rint=:a$ sets $V_1,V_2,\ldots,V_a$ such that $e-2 \mod(r)$ of the $V_i$'s have cardinality $r+1$, and the remaining sets have cardinality $r$. Connect $x$ with all the vertices in $V_1$, $y$ with all the vertices in $V_a$, and each one of the vertices in $V_i$ with all the vertices in $V_{i+1}$, for all $i=1,2,\ldots,a-1$. By construction, the graph $G$ is $(r,\mathbb{I})$-connected and the number of $V_i$'s will be exactly the diameter of $G$ minus one, so
\[\diam(G)= a+1=\lint \frac{e-2}{r}\rint +1.\]
\end{example}

If one knows the weight function a priori, we will show that the bound on the diameter can be made more precise.
Given a graph $G$ on $s$ vertices with a weight function $w$, we can assume for our purposes that the vertices of $G$ are indexed in a non-decreasing order with respect to their weights: 
\[w_1\leq w_2 \leq \ldots \leq w_s.\]
Suppose that $G$ is $l$-connected and $(r,w)$-connected for some $l,r\geq 1$. For every $i$ such that $l+1\leq i\leq s$, let us define
\[\mathcal{A}_i:=\{1, 2,\ldots, k : k\geq i-1 \mbox{ and }\sum_{j=k}^{k-i+2}w_j \leq r-1\}.\]
In other words, $\mathcal{A}_i$ is a maximal set such that each of its subsets of cardinality $i-1$ has sum of the weights less than or equal to $r-1$. Furthermore, since $G$ is $(r,w)$-connected, we are sure that by removing a set of $i-1$ or less vertices chosen from $\mathcal{A}_i$, we do not disconnect $G$.
Let $h:=\max\{i\ :\ \mathcal{A}_i\neq \emptyset \}$, and let $b_i:=|V \setminus \mathcal{A}_i|$ for $i=l+1,\ldots,h$. We have
\[\mathcal{A}_h\subseteq \mathcal{A}_{h-1}\subseteq \ldots \subseteq \mathcal{A}_{l+1}\ \mbox{  and  }\ b_h\geq b_{h-1}\geq \ldots\geq b_{l+1}.\]

\begin{proposition}\label{cor}
Let $G$ be a graph on $s$ vertices that is $l$-connected and $(r,w)$-connected for some $r\geq l\geq 1$. With the notation above, we have 
\[
\diam(G) \leq \lint \frac{s-2+\sum_{i=l+1}^h b_i}{h}\rint +1.
\]
\end{proposition}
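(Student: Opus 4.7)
The plan is to refine the argument of Proposition~\ref{bound}, replacing the weight-based lower bound on each separating layer by a finer combinatorial one that exploits the $\mathcal{A}_i$'s. Let $x$ and $y$ be two vertices realizing $D := \diam(G)$, and for each $j \in \{0, 1, \ldots, D\}$ let $V_j$ denote the set of vertices at distance exactly $j$ from $x$. The sets $V_0, V_1, \ldots, V_D$ are pairwise disjoint, with $x \in V_0$ and $y \in V_D$, and since no edge of $G$ can join $V_a$ with $V_b$ for $|a-b| \geq 2$, the set $V_j$ separates $x$ from $y$ for every $1 \leq j \leq D-1$. Write $n_j := |V_j|$; the $l$-connectedness of $G$ forces $n_j \geq l$.

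The key claim is the following: for each $i$ with $l+1 \leq i \leq h$,
\[
|\{j \in \{1, \ldots, D-1\} : n_j \leq i-1\}| \leq b_i.
\]
Indeed, if $n_j \leq i-1$, then $V_j$ cannot be contained in $\mathcal{A}_i$: otherwise $V_j$ would be a subset of $\mathcal{A}_i$ of cardinality at most $i-1$, hence by definition of $\mathcal{A}_i$ it would have total weight at most $r-1$, contradicting $(r,w)$-connectedness because $V_j$ is separating and therefore must have weight at least $r$. Consequently every such $V_j$ meets $V \setminus \mathcal{A}_i$; since the $V_j$'s are pairwise disjoint, the number of indices $j$ with $n_j \leq i-1$ is at most $b_i = |V \setminus \mathcal{A}_i|$.

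To conclude, I would use the elementary inequality $h - n_j \leq |\{i \in \{l+1, \ldots, h\}: i > n_j\}|$, valid for every $j$ thanks to $n_j \geq l$ (with equality when $n_j \leq h$, and both sides being $\leq 0$ versus $0$ when $n_j > h$). Summing over $j$ and interchanging the order of summation gives
\[
\sum_{j=1}^{D-1}(h - n_j) \leq \sum_{i=l+1}^h |\{j : n_j \leq i-1\}| \leq \sum_{i=l+1}^h b_i.
\]
Combined with $\sum_{j=1}^{D-1} n_j \leq s - 2$ (from $x \in V_0$, $y \in V_D$ and disjointness), this rearranges to $(D-1)\,h \leq s - 2 + \sum_{i=l+1}^h b_i$, and the claimed bound follows by taking the floor since $D-1$ is an integer. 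The main obstacle is spotting this double-counting trick: the straightforward adaptation of Proposition~\ref{bound} would only record that $w(V_j) \geq r$, which ignores the cardinality information encoded by the nested sets $\mathcal{A}_h \subseteq \mathcal{A}_{h-1} \subseteq \cdots \subseteq \mathcal{A}_{l+1}$; the $\mathcal{A}_i$'s are precisely what converts the weight obstruction on each $V_j$ into a cardinality obstruction on the complement $V \setminus \mathcal{A}_i$.
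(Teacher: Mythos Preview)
Your proof is correct and follows essentially the same strategy as the paper: layer the graph by distance from one endpoint of a diameter, and for each $i\in\{l+1,\ldots,h\}$ observe that a separating layer with at most $i-1$ vertices must meet $V\setminus\mathcal{A}_i$, so there can be at most $b_i$ such layers. The paper encodes this same observation by building, for every layer, an $h$-tuple of (possibly repeated) vertices where each repetition at position $j$ lies in $V\setminus\mathcal{A}_j$, and then bounding the total number of repetitions; your double-counting formulation via the key claim and the swap of summation is a cleaner packaging of exactly that argument.
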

\proof
Let $x$ and $y$ be two external points of a diameter of $G$ and let $V_i$ be the subset of $V$ of vertices having distance $i$ from $x$, $i=0,\ldots, \diam(G)$. 
As in the proof of Proposition \ref{bound}, for $i\in \{1,\ldots,\diam(G)-1\}$ the removal of a set $V_i$ must disconnect $x$ and $y$. Fix a certain $i\in \{1,\ldots,\diam(G)-1\}$; since $G$ is $l$-connected, there have to be at least $l$ distinct vertices $y_{i,1},\ldots,y_{i,l}$ in $V_i$.

If there are not other vertices in $V_i$, it means that $\{y_{i,1},\ldots,y_{i,l}\} \nsubseteq \mathcal{A}_{l+1}$. We can suppose $y_{i,l}\in V\setminus \mathcal{A}_{l+1}$, and for every $j=1,\ldots,h-l$ we define 
$y_{i,l+j}:=y_{i,l}\in V\setminus \mathcal{A}_{l+1}\subseteq V\setminus \mathcal{A}_{l+j}.$
If instead there is another vertex $y_{i,l+1}$ in $V_i$, we have that 
$|\{y_{i,l},\ldots,y_{i,l+1}\}|=l+1$
and we can go on: if there are no other vertices in $V_i$, it means that there is at least a vertex in $V\setminus \mathcal{A}_{l+2}$, we can suppose this is $y_{i,l+1}$, and for every $j=2,\ldots,h-l$, we can define $y_{i,l+j}:=y_{i,l+1}\in V\setminus \mathcal{A}_{l+2}\subseteq V\setminus \mathcal{A}_{l+j}$;
otherwise, there is another vertex $y_{i,l+2}\in V_i$ and $|\{y_{1,l},\ldots,y_{1,l+2}\}|=l+2$.
By continuing in such a way, for every $i$, we can construct a sequence of vertices \[y_{i,1},\ \ldots,\   y_{i,l},\ y_{i,l+1}, \ldots,\ y_{i,h}\] such that:
\[|\{y_{i,1},\ldots,y_{i,l}\}|=l \mbox{, and, for } j=1,\ldots,h-l,\ |\{y_{i,1},\ldots,y_{i,l+j}\}|=l+j\ \mbox{  or  } \ y_{i,l+j}\in V\setminus \mathcal{A}_{l+j}.\]
Consider the set all the vertices $y_{i,j}$, for $i=1,\ldots,\diam(G)-1$, $j=1,\ldots, h$, with possible repetitions; since a vertex $y_{i,j}$ can be repeated only if contained in $V\setminus \mathcal{A}_j$, we can bound the number of distinct vertices in $\cup_i V_i$ by using the $b_j$'s. We get the following inequality:
\[s \geq (\diam(G)-1)h-b_{l+1}-b_{l+2}-\ldots-b_h+2.\]
The thesis follows by considering that $\diam(G)$ is integer.
\endproof

In some situations, if one knows the weight function, the bound above is better than the one in Proposition \ref{bound}:

\begin{example}
Consider the graph $G$ as the star graph having center in a vertex with weight $5$ and six rays ending in vertices with weights $2$. Then $G$ is connected, $(5,w)$-connected (but not $(6,w)$-connected). We have:
\[w_1=2\leq w_2=2\leq w_3=2\leq w_4=2\leq w_5=2\leq w_6=2\leq w_7=5.\]
\[ \!\!
\begin{array}{lll}
\mathcal{A}_1=V & & l=1\\
\mathcal{A}_2=\{1,2,3,4,5,6\} &   & b_2=1\\
\mathcal{A}_3=\{1,2,3,4,5,6\} &  & b_3=1\\
\mathcal{A}_4=\emptyset && h=3. 
\end{array}
\]

Hence, the bound coming from Proposition \ref{cor} is $\diam(G)\leq 3$, while $e=17$ and so the bound coming from Proposition \ref{bound} is $\diam(G)\leq 4$.
Note also that the graph is connected but not $2$-connected. Hence, Menger's bound \eqref{Mbound} is $\diam(G)\leq 6$.

In this case, however, none of the three bounds is optimal since $\diam(G)=2$.
This depends in part on the fact that $\mathcal{A}_4$ does not coincide with the set of maximum cardinality such that the removal of $3$ vertices from it do not disconnect $G$. If we chose the following sets:
\[\mathcal{A'}_i\in \{A \subseteq V:i-1 \mbox{ or less vertices from $A$ do not disconnect $G$}, A \mbox{ maximal}\},\]
for $i=2,\ldots,|V|$, we could replace them to the $\mathcal{A}_i$'s in the proof of Proposition \ref{cor}. In this case, we obtain $h=7$ and 
\[\mathcal{A'}_2=\mathcal{A'}_3=\mathcal{A'}_4=\mathcal{A'}_5=\mathcal{A'}_6=\mathcal{A'}_7=\{1,2,3,4,5,6\}.\] 
Then, we get the bound $\diam(G)\leq 2$, which is sharp. Of course, without knowing the structure of the graph, it would have been impossible to compute the $\mathcal{A'}_i$'s only by knowing the weight function and $r$.
\end{example}

\section{Some classes of Hirsch ideals}

In this section we will apply the results of the previous section in order to bound the diameters of radical ideals defining Gorenstein $\kk$-algebras. For this section we assume that $\kk$ is infinite.

The following Theorem gives a bound on $\diam(I)$ that depends on the multiplicity and the Castelnuovo-Mumford regularity of $S/I$. 

\begin{theorem}\label{deg}
Let $S/I$ be reduced and Gorenstein with $\reg S/I=r$, $e(S/I)=e$. Then
\[ \diam(I)\leq \lint\frac{e-2}{r}\rint +1.\]
More precisely, if $\Min(I)=\{\pp_1,\ldots,\pp_s\}$ and we endow $G(I)$ with the weight function $w$ defined, for $i=1,\ldots ,s$, as $w_i=e(S/\pp_i)$, then $G(I)$ is $(r,w)$-connected.
\end{theorem}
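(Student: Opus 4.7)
The statement packages two claims: the numerical bound on $\diam(I)$ and the underlying $(r,w)$-connectedness of $G(I)$. The first is a formal consequence of the second. Indeed, by hypothesis $I$ is radical and, being Gorenstein, $S/I$ is Cohen--Macaulay, hence $I$ is height-unmixed of height $c=n-d$. The multiplicity formula (2) then gives $\sum_{i=1}^s w_i=\sum_{\pp\in\Min(I)}e(S/\pp)=e(S/I)=e$. Once $(r,w)$-connectedness is established, Proposition \ref{bound} yields $\diam(G(I))\leq\lfloor (e-2)/r\rfloor+1$. So the plan reduces to proving that $G(I)$ is $(r,w)$-connected.

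I would argue by contradiction. Suppose $T\subseteq\Min(I)$ satisfies $\sum_{\pp\in T}e(S/\pp)\leq r-1$ and that removing $T$ disconnects $G(I)$. Then $\Min(I)\setminus T$ splits as $A\sqcup B$ with $A,B\neq\emptyset$ and no edge of $G(I)$ joining $A$ to $B$. Set
\[
J_A:=\bigcap_{\pp\in A}\pp,\qquad J_B:=\bigcap_{\pp\in B}\pp,\qquad J:=J_A\cap J_B,\qquad J_T:=\bigcap_{\pp\in T}\pp,
\]
so that $I=J\cap J_T$. The no-edge condition upgrades the generic height bound to $\height(J_A+J_B)\geq c+2$, while $\height(J+J_T)\geq c+1$. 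I would exploit the two Mayer--Vietoris sequences
\[
0\to S/J\to S/J_A\oplus S/J_B\to S/(J_A+J_B)\to 0,\qquad 0\to S/I\to S/J\oplus S/J_T\to S/(J+J_T)\to 0,
\]
where by the height estimates the rightmost terms have dimension strictly less than $d=\dim S/I$.

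The next step is to reduce to the Artinian case, which is where Gorensteinness bites. Since $\kk$ is infinite, I would pick a linear system of parameters $\theta_1,\dots,\theta_d$ for $R=S/I$ and pass to the Artinian Gorenstein ring $\bar R:=R/(\theta)$, which has socle degree $\reg R=r$ and $\kk$-length $e$. Writing $\overline{(-)}$ for images in $\bar R$, note that $J\cdot J_T\subseteq J\cap J_T=I$, hence $\bar J\cdot\bar J_T=0$ in $\bar R$. By the self-duality of Gorenstein Artinian rings (the perfect socle pairing) this forces $\bar J\subseteq\mathrm{Ann}_{\bar R}(\bar J_T)$, and in particular $\ell(\bar J)+\ell(\bar J_T)\leq e$. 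The heart of the matter, and the step that mirrors what is done in \cite{BBV}, is to use the strict height gap $\height(J_A+J_B)\geq c+2$ to produce a socle element of $\bar R/\bar J_T$ in degree $\leq r-1$ that must survive in $\bar R$, and then to translate this into an estimate of the form
\[
\reg R\;\leq\;\ell_\kk\bigl(\bar R/\bar J\bigr)-1\;\leq\;e(S/J_T)-1+\text{error},
\]
which contradicts $e(S/J_T)=\sum_{\pp\in T}e(S/\pp)\leq r-1$ once the error is absorbed.

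The hard part will be this last step: controlling the socle of $\bar R/\bar J_T$ and comparing multiplicities with Artinian lengths. The ideals $J$, $J_A$, $J_B$, $J_T$ are generally not Cohen--Macaulay, so $\theta$ need not be regular on $S/J$ or $S/J_T$, and the inequalities $\ell_\kk(\bar S/(K+(\theta)))\geq e(S/K)$ can be strict. To finesse this I would use the fact that $S/I$ is CM to guarantee that $\theta$ is a regular sequence on the ambient ring $R$ and then track the deviation modules coming from the Mayer--Vietoris sequences via local cohomology, pushing the obstruction into a contribution bounded by the height jump $\height(J_A+J_B)-(c+1)\geq 1$ and by the Gorenstein duality on $\bar R$. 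Everything else (the reduction, the Mayer--Vietoris setup, and the final contradiction) is bookkeeping once this central length/socle estimate is in place.
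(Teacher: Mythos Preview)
Your reduction of the diameter bound to $(r,w)$-connectedness via Proposition~\ref{bound} and the additive multiplicity formula is exactly how the paper argues. The paper, however, does not reprove the $(r,w)$-connectedness: it simply cites \cite[Theorem~4.3 and Corollary~4.4]{BBV}, where it is shown that removing minimal primes of total degree at most $r-1$ cannot disconnect $G(I)$ when $S/I$ is reduced Gorenstein. So the paper's proof is two sentences.

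Your attempt to supply a self-contained argument for the connectedness has plausible architecture (contradiction, Mayer--Vietoris splitting along $A\sqcup B$ and along $J,J_T$, Artinian reduction, Gorenstein socle duality), but it is not a proof: you explicitly label the decisive step as ``the hard part'' and leave it as intention (``produce a socle element \ldots\ that must survive'', ``push the obstruction into a contribution bounded by \ldots''). Moreover, the target estimate you display,
\[
\reg R \;\leq\; \ell_\kk(\bar R/\bar J)-1 \;\leq\; e(S/J_T)-1+\text{error},
\]
is pointed the wrong way. Since $\bar R/\bar J\cong S/(J+(\theta))$ with $J=J_A\cap J_B$ of height $c$, one has $\ell_\kk(\bar R/\bar J)\geq e(S/J)=e-e(S/J_T)$, which is \emph{large} when $T$ is small, not bounded above by $e(S/J_T)$. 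So the chain of inequalities cannot close as written, and no amount of ``error'' absorption fixes the direction. The argument in \cite{BBV} instead exploits Gorenstein liaison: one identifies the link $I:J_T$ with $J=J_A\cap J_B$ and uses the degree hypothesis $e(S/J_T)\leq r-1$ together with Gorenstein duality to control where $\bar J_T$ sits inside $\bar R$, forcing the dual graph of $S/J$ to remain connected. Your setup is compatible with that route, but the contradiction is not derived.
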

\proof
Following the proofs of \cite[Theorem 4.3 and Corollary 4.4]{BBV}, the removal of a set of vertices such that the sum of the degrees of the corresponding minimal primes is at most $r-1$ does not disconnect $G(I)$.  Therefore $G(I)$ is $(r,w)$-connected, and since $e=\sum_{i=1}^sw_i$ Proposition \ref{bound} yields the diameter bound.
\endproof

\begin{remark}
Note that, as we mentioned in the introduction, if $S/I$ is Cohen-Macaulay, then $\diam(I)\leq e(S/I)-1$.
Theorem \ref{deg} improves substantially this bound for $S/I$ reduced and Gorenstein.
\end{remark}
 
Here one could use Proposition \ref{cor} to bound $\diam(I)$ in terms of the $\mathcal{A}_i$'s, the $b_i's$ and $h$ computed for $G(I)$ with weight function as in Theorem \ref{deg}. The following example shows the difference of accuracy of the bounds derived from Propositions \ref{bound} and \ref{cor}.

\begin{example}
Let $S=\QQ[x,y,z,t,w]$ and consider the following ideal:
\[I=((x+w)(x-w)+z^2,(x+t)(y+2t)+z(x+y),zt).\]
The ideal $I$ is a radical complete intersection ideal and $\reg S/I=3$. With respect to the multiplicity weight function $w$, $G(I)$ is $(3,w)$-connected by Theorem \ref{deg}. One can check with \textit{Macaulay2} \cite{M2} that the minimal primes of $I$ are:
\[ \!\!
\begin{array}{lll}
\pp_1=(z,x+w,x+t), & \: \pp_3=(z,x-w,x+t),&  \: \pp_5=(t,x^2+z^2-w^2,xy+z(x+y)).\\
\pp_2=(z,x-w,y+2t), & \: \pp_4=(z,x+w,y+2t),
\end{array}
\]

The multiplicities of the minimal primes are, respectively:
\[w_1=w_2=w_3=w_4=1, w_5=4.\]
The dual graph of $G(I)$ is the complete graph on 5 vertices minus two disjoint edges ($\{2,5\}$ and $\{3,4\}$), so $\diam(I)=2$. In this case, $h=3, b_2=1, b_3=1$. Hence, the bound given by Proposition \ref{cor} is sharp, contrary to the one in Theorem \ref{deg}.

\end{example}

\begin{theorem}\label{h-vec}
Assume that $S/I$ is reduced and Gorenstein. 
If $\reg S/I\leq 3$, then $I$ is Hirsch.
\end{theorem}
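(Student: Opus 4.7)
My plan is to combine Theorem \ref{deg} with the symmetry of the $h$-vector of a Gorenstein graded algebra. For reduced Gorenstein $S/I$ of regularity $r\leq 3$, this symmetry pins down $e(S/I)$ tightly in terms of $c:=\height(I)$, after which the bound of Theorem \ref{deg} forces $\diam(I)\leq c$ via a short arithmetic check.

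First I would reduce to the case where $I$ contains no linear form. If $\ell\in I$ is a minimal linear generator, after a change of coordinates I may take $\ell = x_n$ and write $S/I \cong S'/I'$ with $S' = \kk[x_1,\ldots,x_{n-1}]$ and $I'$ the image of $I$. This substitution preserves reducedness, Gorensteinness, the regularity, and the dual graph (the correspondence $\pp\mapsto \pp/(x_n)$ is a bijection on minimal primes respecting the height-of-the-sum condition), while decreasing the height by one; proving the Hirsch inequality for $I'$ therefore yields a strictly stronger statement for $I$. Iterating, I may assume $I$ has no degree-one minimal generators, so that $\dim_\kk (S/I)_1 = n$ and hence, comparing with the Hilbert-series expansion, $h_1 = n-d = c$.

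Next I would case-split on $r \in \{0,1,2,3\}$. Gorenstein symmetry $h_i = h_{r-i}$ together with $h_0 = h_r = 1$ dictates the full $h$-vector: for $r=0$, $h=(1)$, $e=1$, and $\diam(I)=0$; for $r=1$, $h=(1,1)$ forces $c = 1$ and $e = 2$, so $\diam(I)\leq 1$; for $r=2$, $h=(1,c,1)$ and $e = c+2$; for $r=3$, $h=(1,c,c,1)$ and $e = 2c+2$. Applying Theorem \ref{deg} in the last two cases yields $\diam(I)\leq \lfloor c/2\rfloor + 1$ and $\diam(I)\leq \lfloor 2c/3\rfloor + 1$ respectively, and a direct verification (small $c$ by inspection, larger $c$ by the trivial inequalities $c/2 + 1 \leq c$ for $c \geq 2$ and $2c/3 + 1 \leq c$ for $c \geq 3$) confirms that both of these upper bounds are at most $c$ in every subcase.

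There is no real conceptual obstacle in this plan: Theorem \ref{deg} does the heavy lifting, and the Gorenstein hypothesis enters only to make the $h$-vector symmetric, thereby restricting it to at most four entries and forcing $e$ to be a linear function of $c$. The only step that needs genuine care is the initial reduction to the case of no linear generators, where one must simultaneously verify the preservation of reducedness, Gorensteinness, regularity, and the combinatorics of the dual graph when passing from $S/I$ to $S'/I'$.
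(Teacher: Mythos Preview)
Your proposal is correct and follows essentially the same approach as the paper: reduce to the case of no linear generators, use Gorenstein symmetry of the $h$-vector to list the possible $h$-vectors for $r\leq 3$, read off $e(S/I)$, and apply Theorem~\ref{deg}. The only difference is that you spell out the reduction step and the small-$c$ arithmetic more carefully than the paper does.
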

\proof

We can assume that $I$ does not contain linear forms. Since the $h$-vector is symmetric and its length is equal to the regularity plus 1, the possible $h$-vectors for $S/I$ as in the assumptions are the following:
\[ \!\!
\begin{array}{lll}
h=(1,1) & & \mbox{if } \reg S/I=1;\\
h=(1,\height(I),1) &  & \mbox{if } \reg S/I=2;\\
h=(1,\height(I),\height(I),1) & & \mbox{if } \reg S/I=3.
\end{array}
\]
Because the multiplicity of $S/I$ equals $h(1)$, by Theorem \ref{deg} we get
\[\diam(I)\leq \max\Big\{1, \lint \frac{\height(I)}{2}\rint +1, \lint \frac{2\height(I)}{3}\rint +1\Big\}\leq \height (I).\]

\endproof

\begin{lemma}\label{l:ci}
Let $I, J$ be two homogeneous ideals of $S$, $I\subseteq J$. Suppose that $S/J$ is Cohen-Macaulay, $S/I$ is Gorenstein and $\height(I)=\height(J)$. Then $\reg S/I\geq \reg S/J$ and
\[\reg S/I= \reg S/J\iff I=J.\]
\end{lemma}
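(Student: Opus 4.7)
The plan is to argue entirely via local cohomology at the irrelevant maximal ideal $\mathfrak{m}=(x_1,\ldots,x_n)$. Set $d=\dim S/I=\dim S/J$ (the heights agree, so do the Krull dimensions). From the short exact sequence
\[
0\to J/I\to S/I\to S/J\to 0
\]
and the vanishing $H^i_{\mathfrak{m}}(S/I)=H^i_{\mathfrak{m}}(S/J)=0$ for $i\ne d$ coming from Cohen--Macaulayness, the long exact sequence in local cohomology collapses to
\[
0\to H^d_{\mathfrak{m}}(J/I)\to H^d_{\mathfrak{m}}(S/I)\to H^d_{\mathfrak{m}}(S/J)\to 0,
\]
with $H^i_{\mathfrak{m}}(J/I)=0$ for $i\ne d$. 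Since $S/I$ is Cohen--Macaulay (hence unmixed), every associated prime of $J/I$ has dimension $d$, so if $J/I\ne 0$ it is itself Cohen--Macaulay of dimension $d$ and $H^d_{\mathfrak{m}}(J/I)\ne 0$.

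Using the identity $\reg M=d+a(M)$ for a Cohen--Macaulay graded module $M$ of dimension $d$, where $a(M)=\max\{j:H^d_{\mathfrak{m}}(M)_j\ne 0\}$, the surjection on the right of the short exact sequence immediately gives $a(S/J)\le a(S/I)$ and hence $\reg S/J\le\reg S/I$; this is the first assertion.

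For the equality characterization, I would show that $I\subsetneq J$ forces $\reg S/J<\reg S/I$. Set $a:=a(S/I)=\reg S/I-d$. Because $S/I$ is Gorenstein, $H^d_{\mathfrak{m}}(S/I)$ is (up to a shift) the graded injective hull of $\kk$ as an $S/I$-module, so its socle is one-dimensional and concentrated in degree $a$. The nonzero graded submodule $H^d_{\mathfrak{m}}(J/I)\subseteq H^d_{\mathfrak{m}}(S/I)$ must therefore meet this socle, forcing $H^d_{\mathfrak{m}}(J/I)_a\ne 0$. Reading the short exact sequence in degree $a$, whose middle term is one-dimensional, then yields $H^d_{\mathfrak{m}}(S/J)_a=0$, so $a(S/J)<a$ and $\reg S/J<\reg S/I$. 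The reverse implication is obvious. The only delicate step is the socle claim, which is a standard consequence of local duality for graded Gorenstein rings; granted it, the rest is bookkeeping on the long exact sequence.
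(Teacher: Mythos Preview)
Your proof is correct and takes a genuinely different route from the paper's. The paper reduces to the Artinian case: using that $\kk$ is infinite, it chooses linear forms $l_1,\ldots,l_d$ that are simultaneously regular on $S/I$ and $S/J$, sets $A=S/(I+(\underline{l}))$ and $B=S/(J+(\underline{l}))$, and then argues directly with the graded surjection $\phi:A\twoheadrightarrow B$. The key step there is that the socle $A_r$ of the Artinian Gorenstein ring $A$ is one-dimensional, and any nonzero element of $\ker\phi$ can be multiplied into $A_r$, contradicting $\phi(A_r)\neq 0$. Your argument is essentially the Matlis-dual of this: instead of passing to an Artinian reduction, you stay with $S/I$ and use that the socle of $H^d_{\mathfrak{m}}(S/I)$ is one-dimensional and sits in the top degree $a$, so any nonzero submodule---in particular $H^d_{\mathfrak{m}}(J/I)$---must contain it. Each approach exploits the same Gorenstein socle phenomenon from opposite ends of duality. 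Your version has the advantage of not needing the infinite-field hypothesis or the construction of a common regular sequence; the paper's version is more elementary in that it avoids local cohomology and graded local duality entirely. One small expository point: the clause ``so if $J/I\ne 0$ it is itself Cohen--Macaulay'' is not a consequence of the unmixedness alone but of the already-established vanishing $H^i_{\mathfrak{m}}(J/I)=0$ for $i<d$ together with $\dim J/I=d$; you have all the ingredients, just make the dependence explicit.
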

\proof
Let $d:=\dim(S/I)=\dim(S/J)$. Being $S/I$ and $S/J$ Cohen-Macaulay and $\kk$ infinite, it is possible to find $\underline{l}=l_1,\ldots,l_d \in S_1$ that form a $S/I$-regular sequence that is also a $S/J$-regular sequence. Let $A=S/(I+(\underline{l}))$ and $B=S/(J+(\underline{l}))$. There is a natural surjective map of graded Artinian $k$-algebras
\[\phi: A \twoheadrightarrow B.\]
Let us call $r=\reg S/I=\max\{s:A_s\neq 0\}$ and $t=\reg S/J=\max\{s:B_s\neq 0\}$. 

From the fact that $\phi$ is graded, we can see that $r\geq t$. Since $S/I$ is Gorenstein, the $\kk$-vector space $A_r$ is generated by a nonzero element $f$. If $r=t$, $B_r\neq 0$, and hence $\phi(f)\neq 0$. We will show that $\phi$ is then an isomorphism. Suppose that for some $j<r$ there is a non-zero element $g\in A_j$ such that $\phi(g)=0$. Since $g \notin\ A_r=(0:\mm)$,

we can find an element $h\in A_{r-j}$ such that $gh=f$. But $0\neq \phi(f)=\phi(gh)=\phi(g)\phi(h)$, which is a contradiction against the fact that $\phi(g)=0$. Hence $\phi$ is injective and $I=J$.
\endproof

\begin{corollary}\label{smallcodimension}
The conjecture in the introduction is true when $S/I$ is reduced and one of the following conditions holds:
\begin{compactitem}
\item[{\rm 1)}] $S/I$ is Gorenstein and $\height(I)\leq 4$;
\item[{\rm 2)}] $S/I$ is Gorenstein but not a complete intersection and $\height(I)=5$.
\end{compactitem} 
\end{corollary}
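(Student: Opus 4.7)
Setting $c := \height(I)$, my strategy is to combine Lemma \ref{l:ci} with Theorem \ref{h-vec} and Theorem \ref{deg}. Since $\kk$ is infinite and $I$ is generated in degree $2$, graded prime avoidance yields a complete intersection $J \subseteq I$ generated by $c$ quadrics, and $\reg S/J = c$ by the Koszul complex. Applying Lemma \ref{l:ci} to the inclusion $J \subseteq I$ (with $S/J$ Gorenstein, $S/I$ in particular Cohen--Macaulay, both of height $c$) then yields the \emph{a priori} bound $\reg S/I \leq c$, with equality forcing $I = J$ to be itself a complete intersection.

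With this in hand, part $(1)$ splits cleanly. If $c \leq 3$, then $\reg S/I \leq 3$ and Theorem \ref{h-vec} concludes. The only remaining subcase is $c = 4$ with $\reg S/I = 4$, in which $I$ is forced to be a complete intersection of four quadrics; its $h$-polynomial is then $(1+t)^4$, so $e(S/I) = 16$, and I would conclude via Theorem \ref{deg} with $r = 4$, which gives $\diam(I) \leq \lfloor 14/4 \rfloor + 1 = 4 = \height(I)$.

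For part $(2)$, the non-CI hypothesis combined with Lemma \ref{l:ci} forces $\reg S/I \leq 4$; the case $\reg S/I \leq 3$ is again handled by Theorem \ref{h-vec}. The delicate case is $\reg S/I = 4$: Gorenstein symmetry gives $h = (1, 5, h_2, 5, 1)$, so $e(S/I) = 12 + h_2$, and Theorem \ref{deg} only produces the desired $\diam(I) \leq 5$ once $h_2 \leq 9$. This is the main obstacle. My plan is to secure it through the identity $\mu(I) = 15 - h_2$, which follows from a short Hilbert-series computation (comparing the coefficient of $t^2$ in $h(t)/(1-t)^d$ with $\dim_\kk S_2 = \binom{d+6}{2}$, where $d = \dim S/I = n - 5$); equivalently, a generic linear Artinian reduction preserves the number of minimal generators, and the resulting Artinian Gorenstein quotient has $\dim A_2 = h_2$ inside the $15$-dimensional space $\bar S_2$. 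Since $I$ is not a complete intersection of height $5$, Krull's height theorem gives $\mu(I) \geq c+1 = 6$, hence $h_2 \leq 9$ and $e(S/I) \leq 21$; Theorem \ref{deg} then closes the argument with $\diam(I) \leq \lfloor 19/4 \rfloor + 1 = 5 = \height(I)$.
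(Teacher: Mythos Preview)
Your proof is correct and follows essentially the same approach as the paper: both use Lemma~\ref{l:ci} (applied with the complete intersection $J$ in the Gorenstein slot) to bound $\reg S/I\le c$, invoke Theorem~\ref{h-vec} whenever $\reg S/I\le 3$, and close the remaining boundary cases with Theorem~\ref{deg}. The only differences are cosmetic: the paper organises part~(1) by the dichotomy CI versus non-CI while you split on $c\le 3$ versus $c=4$, and your derivation of $h_2\le 9$ via the explicit identity $\mu(I)=15-h_2$ spells out what the paper compresses into the remark that $h_2=10$ would force the same number of quadrics as a complete intersection.
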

\proof
If $I$ is a quadratic complete intersection and $\height(I)\leq 4$ we have that $e(S/I)=2^{\height(I)}$, $\reg S/I=\height(I)$, so Theorem \ref{deg} gives the Hirsch bound. If $I$ is not a complete intersection, we can find a complete intersection of quadrics $J\subsetneq I$ with $\height(I)=\height(J)$. By Lemma \ref{l:ci}, $\reg S/I<\reg S/J=\height(J)\leq 4$. Therefore $\diam(I)\leq \height(I)$ by Theorem \ref{h-vec}. This proves point 1).

For point 2), consider a complete intersection of quadrics $J\subsetneq I$ with $\height(I)=\height(J)$. By Lemma \ref{l:ci}, the $h$-vector of the Gorenstein ring $S/I$ has length less than $\reg S/J=5$. In the case that $\reg S/I=4$ (that is the only case not covered by Theorem \ref{h-vec}), this is of kind
\[h=(1,5,h_2,5,1),\]
where $h_2<10$, as otherwise the number of quadrics in $S/I$ would be the same as in the complete intersection case. Hence, by applying Theorem \ref{deg}, $\diam(I)\leq 5$. 
\endproof

\begin{remark}
When $I$ is a radical quadratic complete intersection of height 5, Theorem \ref{deg} only yields $\diam(I)\leq 7$. It would be very interesting to know whether $\diam(I)=7$, or even $6$, is actually possible. 
\end{remark}

The following proposition proves that a complete intersection curve in $\mathbb{P}^3$ that is contained in a union of planes is Hirsch.

\begin{proposition}\label{union}
Let $I=(f,g) \subset S=\kk[x_1,x_2,x_3,x_4]$ be a complete intersection ideal where $g=\prod_{i=1}^e l_i$ for some $l_i \in S_1$ and $\mathrm{MCD}(f,g)=1$. Then $I$ is Hirsch.
\end{proposition}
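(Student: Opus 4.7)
The plan is to show that any two minimal primes $\pp, \qq$ of $I$ are at distance at most $2 = \height(I)$ in $G(I)$. Since $G(I)$ depends only on $\sqrt{I}$, I may first replace $g$ by the product of its distinct linear factors, so that $l_1, \ldots, l_e$ are pairwise non-proportional. Because $\gcd(f,g)=1$, no $l_i$ divides $f$, hence each $(f, l_i)$ is a height-$2$ complete intersection. Any minimal prime of $(f, l_i)$ has height $2$ and contains $I=(f,g)$ (since $l_i$ divides $g$), so it is a minimal prime of $I$; conversely, every minimal prime of $I$ contains some $l_i$ (because it contains $g$) and, once it contains $l_i$, is minimal over $(f, l_i)$. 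Geometrically, each such prime corresponds to an irreducible component of the plane curve $V(f, l_i) \subset V(l_i) \cong \PP^2$.

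The key lemma I would establish is that any two distinct minimal primes $\pp, \pp'$ of $I$ sharing a common $l_i$ are adjacent in $G(I)$. Indeed, they are two distinct irreducible components of the plane curve $V(f, l_i)$ inside $V(l_i) \cong \PP^2$, so by Bezout's theorem in $\PP^2$ their intersection $V(\pp) \cap V(\pp')$ is a nonempty finite set of points, giving $\height(\pp + \pp') = 3 = \height(I) + 1$.

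For the remaining case, suppose $\pp$ and $\qq$ have disjoint index sets $S(\pp) := \{i : l_i \in \pp\}$ and $S(\qq) := \{i : l_i \in \qq\}$, and fix $a \in S(\pp)$, $b \in S(\qq)$. I would split into two subcases. If $f \in (l_a, l_b)$, then the height-$2$ prime $(l_a, l_b)$ contains $I$, and no smaller prime does (a prime of height $\leq 1$ containing $I$ would be principal, forcing a nontrivial common factor of $f$ and $g$), so $\mathfrak{r} := (l_a, l_b)$ is a minimal prime of $I$ distinct from both $\pp$ and $\qq$; applying the key lemma to $l_a$ and then to $l_b$ gives the length-$2$ path $\pp - \mathfrak{r} - \qq$. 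If instead $f \notin (l_a, l_b)$, then inside $V(l_a) \cong \PP^2$ the irreducible curve $V(\pp)$ meets the line $V(l_a, l_b)$ in a point $P$ (Bezout in $\PP^2$); this $P$ lies on $V(f) \cap V(l_b)$, hence on $V(\mathfrak{r})$ for some minimal prime $\mathfrak{r}$ of $(f, l_b)$. Since $\mathfrak{r}$ is a minimal prime of $I$ containing $l_b$, the key lemma gives an edge $\mathfrak{r} - \qq$ (or $\mathfrak{r} = \qq$); and since $V(\pp)$ and $V(\mathfrak{r})$ are distinct irreducible curves ($l_b \in \mathfrak{r} \setminus \pp$) sharing the point $P$, we also get the edge $\pp - \mathfrak{r}$.

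The main obstacle is the disjoint-indices case: one must produce an intermediate minimal prime $\mathfrak{r}$, and the two subcases ($f$ lying in or not in $(l_a, l_b)$) need genuinely different arguments. The ``not in'' subcase relies on the Bezout-type fact that any irreducible curve in $\PP^2$ meets any line, combined with the observation that each of the intermediate minimal primes appearing lies over some $l_i$, which then feeds back into the key lemma.
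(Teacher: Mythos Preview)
Your argument is correct and follows essentially the same route as the paper's proof: both show $\diam(I)\leq 2$ by fixing planes $V(l_a)\supset V(\pp)$ and $V(l_b)\supset V(\qq)$, distinguishing whether the line $V(l_a,l_b)$ is itself a component (your subcase $f\in(l_a,l_b)$), and otherwise using that a component meets this line in a point lying on some component in the other plane. The only cosmetic differences are that you isolate ``same plane $\Rightarrow$ adjacent'' as an explicit key lemma (the paper uses it implicitly), and your intermediate prime $\mathfrak{r}$ lives in the plane of $\qq$ whereas the paper's $C_p$ lives in the plane of $C_a$; these are symmetric choices.
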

\proof
Consider $X=\Proj(S/(f)), Y=\Proj(S/(g))$, and let $C_1,\ldots, C_n$ be the primary components of $Z=X\cap Y$. Note that $\sum_{i=1}^n \deg(C_i)=de$, where $d=\deg(f)$. By assumption, $Y$ is a union of $e$ planes, say $H_1,\ldots,H_e$; it is harmless to assume that $Y$ is reduced, so that we can assume $H_i\neq H_j$ if $i\neq j$. Observe that each plane contains a set of curves of $Z$ whose sum of the degrees is exactly $d$, as otherwise the intersection between a plane and a surface of degree $d$ would have degree more than $d$.

Now consider two curves in $Z$, $C_a$ and $C_b$, and let us show that their distance in the dual graph is at most $2$. Suppose that $C_a\cap C_b=\emptyset$, and fix two planes of $Y$: $H_k\supset C_a$ and $H_m\supset C_b$. If the line $R_{km}:=H_k\cap H_m$ is a component of $Z$, we are finished. Otherwise, suppose that $R_{km}\neq C_i$ (as sets) $\forall \ i=1,\ldots,n$. Since 
\[R_{km}\cap Z=(R_{km}\cap H_k)\cap Z=R_{km}\cap (H_k\cap X)=R_{km}\cap (\cup_{C_i\subset Z\cap H_k}C_i),\]
$R_{km}\cap Z$ is the intersection of a line with a plane curve of degree $d$, and hence there are exactly $d$ points $P_1,\ldots, P_d$ on it (counted with multiplicity). 

Since the point $P:=C_b\cap R_{km}$ is a point of $R_{km}\cap Z$, it must coincide with one of the $P_i$'s. Hence, there exists a curve $C_p \subset H_k\cap X$ such that $C_p\cap R_{km}=P$. Then $C_p\cap C_b=\{P\}\neq \emptyset$ and $C_p\cap C_a \neq \emptyset$ because $C_a, C_p \subset H_k$. So the distance between $C_a$ and $C_b$ in $G(I)$ is 2.
\endproof

\begin{remark}
Schl\"afli's double six is the smallest example of a graph representing a complete intersection of lines in $\mathbb{P}^3$ that is not Hirsch (see \cite[Example D]{BDV}).
In fact, for a complete intersect of kind $(d,e)$,
\[\diam(G)\leq \min\{d,e\} \quad (\cite[\mbox{Lemma 1.6}]{BDV}),\]
and the dual graph of a reduced complete intersection of two cubics satisfies $\diam(G)\leq 2$ by Theorem \ref{deg}.
\end{remark}

\section{Diameters of initial ideals}

The aim of this section is to understand if there exists, and in case what it is, a relation between the dual graph of an ideal and the dual graph of its initial ideal with respect to some term order $\prec$ on the polynomial ring $S$. We can, and will, assume that $x_i>x_j$ whenever $i<j$. All the results in this section hold also in the non-homogeneous setting. In general, we have the following result, which is a consequence of \cite{Va}:

\begin{theorem}
If $I\subset S$ is an ideal such that $\diam(I)<\infty$, then $\diam(\init I)<\infty$.
\end{theorem}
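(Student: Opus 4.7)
The plan is to reformulate the hypothesis $\diam(I)<\infty$ as a classical connectedness property of $V(I)=\mathrm{Spec}(S/I)$, invoke the main result of \cite{Va} to transport that property to $\init I$, and then translate back.

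First, I would note that $\diam(I)<\infty$ amounts, as already observed in the introduction, to $I$ being height-unmixed together with $G(I)$ connected. Since the vertices of $G(I)$ are the irreducible components of $V(I)$ and two components are joined by an edge exactly when their intersection has codimension one inside $V(I)$, this condition is the classical one that $V(I)$ is pure-dimensional and \emph{connected in codimension one} (meaning that removing any closed subset of codimension at least $2$ does not disconnect $V(I)$).

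Next, I would apply the main result of \cite{Va}, which, through local-cohomology techniques, shows that the property ``equidimensional and connected in codimension one'' is inherited by the Gröbner degeneration $I\rightsquigarrow \init I$. Applying this to our situation, $V(\init I)$ is pure-dimensional and connected in codimension one. Finally, running the dictionary between connectedness of $G(-)$ and geometric connectedness in codimension one in reverse, I would conclude that $\init I$ is height-unmixed with $G(\init I)$ connected, hence $\diam(\init I)<\infty$.

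The substantive input is the theorem of \cite{Va}; the only real care needed is to check that the notion of connectedness in codimension one used there is exactly the one encoded by the dual graph $G(-)$ on an equidimensional ideal, but this is standard. Both translation steps — from $\diam$-finiteness to the geometric connectedness property and back — are formal once height-unmixedness is in hand on both sides.
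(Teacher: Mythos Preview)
Your proposal is correct and follows essentially the same route as the paper: the paper invokes \cite[Remark 1.1]{Va} for the equivalence between connectedness of $G(J)$ and the condition on the connectivity dimension of $\mathrm{Spec}(S/J)$, and then applies \cite[Theorem 2.5]{Va} to pass from $I$ to $\init I$, exactly as you outline. The only difference is cosmetic: you spell out the translation via ``height-unmixed and connected in codimension one'' explicitly, whereas the paper packages it in the single citation to \cite[Remark 1.1]{Va}.
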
 
\begin{proof}
For any ideal $J\subset S$, the dual graph $G(J)$ is connected if and only if the connectivity dimension of $\mathrm{Spec}(S/J)$ is greater than or equal to $\dim(S/J)-1$ by \cite[Remark 1.1]{Va}. So the thesis immediately follows by \cite[Theorem 2.5]{Va}.
\end{proof}

The viceversa does not hold:
\begin{example}
Let $I=(x_1+x_2,x_3)\cap (x_2,x_3+x_4)\subseteq \kk[x_1,x_2,x_3,x_4]$. Then $G(I)$ consists of two disconnected vertices. Choosing as $\prec$ the lexicographic order, however, $\init I=(x_1x_2,x_1x_3,x_2x_3,x_3^2)$ and $\Min(\init I)=\{(x_1,x_3),(x_2,x_3)\}$, so the dual graph of $\init I$ is a path of length one. Hence $\diam(\init I)<\infty$, yet $\diam(I)=\infty$.
\end{example}

In general, the gap between $\diam(\init I)$ and $\diam(I)$ can be arbitrarily large in both directions:

\begin{example}
Let $I$ be an ideal with diameter $m$, for example consider in $\kk[x_1,\ldots,x_{m+2}]$ the intersection of the $m+1$ prime ideals 
\[(x_1,x_2),(x_2,x_3),\ldots, (x_m,x_{m+1}), (x_{m+1},x_{m+2}).\]
Let $g \in \mathrm{GL}_{m+2}(\kk)$ be a generic coordinate change, and let $J=g(I)$. Then $J$ has the same dual graph of $I$. However, if $\prec$ is the degree reverse lexicographic order $\init J$ has only one minimal prime (see \cite[Section 15.9]{Ei}). So 
\[m=\diam(J)\gg\diam(\init J)=0.\] 
\end{example}

\begin{example}
Let $X$ be the following matrix of variables over some field $\kk$:
\[X=\left( \begin{array}{cccc}
x_1 & x_2 & \ldots & x_m \\
x_{m+1} & x_{m+2} & \ldots & x_{2m}  \end{array} \right).\]
Let $I \subset \kk[x_1,\ldots,x_{2m}]$ be the ideal of $2$-minors of $X$. It is well known that $I$ is a prime ideal. Furthermore, for any term order $\prec$ the $2$-minors form a Gr\"obner basis (\cite[Theorem 7.2]{SZ}). By choosing, for example, the lexicographic term order, hence we have
\[\init I=(x_ix_{m+k}\ |\ i+1\leq k \leq m, \ i=1,\ldots, m-1).\] 

One can easily check that the primary decomposition of $\init I$ is
\[\init I=\bigcap_{k=1}^m (x_1,\ldots,x_{k-1},x_{m+k+1},\ldots,x_{2m}).\]
By denoting $\pp_k:=(x_1,\ldots,x_{k-1},x_{m+k+1},\ldots,x_{2m})$, we have $G(\init I)=([m],E)$ where
\[\{i,j\}\in E \Leftrightarrow |i-j|=1,\quad i,j \in [m].\]
It follows that $G(\init I)$ is a path, hence
\[0=\diam(I)\ll\diam(\init I)=m-1.\]

\end{example}

However, under some additional assumptions the diameter of the initial ideal bounds from above the diameter of the ideal. For our next result we need the following simple fact (for the proof see for example \cite[Corollary 2]{Kn}).

\begin{lemma}\label{Knu} Let $A$ be a finite set of ideals in $S$ and $\prec$ a term order such that 
$\init (\cap _{I \in A'}I)$ is radical for all $ A'\subseteq A$. 
Then 
\[\init\left(\sum_{I \in A'} I\right)=\sum_{I \in A'}\init I \mbox{ \ and \ } \init\left(\bigcap _{I \in A'}I\right)=\bigcap _{I \in A'}\init I \ \ \ \ \ \forall \ A'\subseteq A.\]
\end{lemma}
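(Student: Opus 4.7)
The inclusions
\[\init\Bigl(\sum_{I \in A'} I\Bigr) \supseteq \sum_{I \in A'} \init I \qquad \text{and} \qquad \init\Bigl(\bigcap_{I \in A'} I\Bigr) \subseteq \bigcap_{I \in A'} \init I\]
are immediate from the fact that $\init f \in \init J$ whenever $f \in J$, so only the reverse inclusions require proof. The plan is to handle the intersection identity first and then derive the sum identity from it.

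For the intersection identity, I would induct on $|A'|$, with the inductive step reducing to the following two-ideal claim: if $J, K \subseteq S$ are ideals such that $\init(J \cap K)$ is radical, then $\init J \cap \init K \subseteq \init(J \cap K)$. Since $\init J \cap \init K$ is a monomial ideal, it will suffice to treat a monomial $m$ lying in this intersection. By the definition of the initial ideal one can pick $f \in J$ and $g \in K$ with $\init f = \init g = m$. The crucial observation is that $fg$ belongs to $J \cap K$ and $\init(fg) = \init f \cdot \init g = m^2$, placing $m^2 \in \init(J \cap K)$; the radicality hypothesis then collapses this to $m \in \init(J \cap K)$. In the induction, with $A' = A'' \sqcup \{I_0\}$, I would apply this two-ideal claim to $J := \bigcap_{I \in A''} I$ (for which $\init J = \bigcap_{I \in A''} \init I$ by the inductive hypothesis) and $K := I_0$; the needed radicality is available because $\init(J \cap K) = \init\bigl(\bigcap_{I \in A'} I\bigr)$ is radical by assumption.

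For the sum identity, once the intersection identity is available, the argument is a standard Gr\"obner-style reduction. Given $f \in \sum_{I \in A'} I$, write $f = \sum_{I \in A'} g_I$ with $g_I \in I$, and among all such decompositions pick one minimizing $m := \max_{I \in A'} \init g_I$ (a legitimate choice because term orders are well-orders on monomials). If the leading terms at $m$ do not cancel in $f$, then $\init f$ is a nonzero scalar multiple of $m$, which lies in $\init I_\star$ for any $I_\star$ achieving the maximum, and we are done. Otherwise cancellation forces at least two summands $g_{I_1}, g_{I_2}$ with $\init g_{I_1} = \init g_{I_2} = m$; then $m \in \init I_1 \cap \init I_2 = \init(I_1 \cap I_2)$ by the intersection identity, so one can pick $h \in I_1 \cap I_2$ with $\init h = m$ and rebalance $g_{I_1} \mapsto g_{I_1} - \lambda h$, $g_{I_2} \mapsto g_{I_2} + \lambda h$, with $\lambda$ chosen to kill the $m$-coefficient of $g_{I_1}$. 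This preserves $f$, keeps each summand in its ideal, and strictly decreases $\#\{I \in A' : \init g_I = m\}$; iterating drives this count to zero, forcing $m$ to drop and contradicting minimality.

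The hard part is the two-ideal intersection step, and more precisely the product trick that promotes $m^2 \in \init(J \cap K)$ to $m \in \init(J \cap K)$ using radicality. This is the sole place where the radicality hypothesis is exploited and is the substantive content of the lemma (attributed by the authors to Knutson). Everything else --- the induction for intersections and the Gr\"obner rebalancing for sums --- is formal bookkeeping once that observation is in hand.
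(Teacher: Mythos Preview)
Your argument is correct. The paper does not actually prove this lemma; it simply states it as a ``simple fact'' and refers the reader to \cite[Corollary 2]{Kn}, so there is no in-paper proof to compare against. The product trick you isolate --- $fg\in J\cap K$ with $\init(fg)=m^2$, then invoking radicality of $\init(J\cap K)$ to get $m\in\init(J\cap K)$ --- is exactly Knutson's argument, and your reduction of the sum identity to the intersection identity via a leading-term rebalancing is the standard companion step. One small wrinkle in your phrasing: the rebalancing iteration need not drive the count $\#\{I:\init g_I=m\}$ all the way to zero; it may terminate at count $1$, at which point there is no cancellation and your first case applies directly to give $\init f=m\in\init I_\star$. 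Either outcome (count reaches $1$, or count reaches $0$ contradicting minimality of $m$) finishes the argument, so the logic is sound.
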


\begin{theorem}\label{comp}
Let $I\subset S$ be a radical ideal with primary decomposition $I=\mathfrak{p}_1\cap \ldots \cap \mathfrak{p}_s$, and $\prec$ a term order. If $\init(\cap_{i\in A}\mathfrak{p}_i)$ is radical for all $A\subseteq [s]$, then 
\[\diam(I)\leq \diam(\init I).\]
\end{theorem}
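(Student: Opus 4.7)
The key is to pass, via Lemma \ref{Knu}, from the primary decomposition of $I$ to a compatible decomposition of $\init I$, and then to show that a path in $G(\init I)$ descends to a path of no greater length in $G(I)$.

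First, I would apply Lemma \ref{Knu} to both the intersection and the pairwise sums. Since $\init(\bigcap_{i\in A}\pp_i)$ is radical for every $A\subseteq [s]$, the lemma gives
\[
\init I \;=\; \bigcap_{i=1}^{s}\init \pp_i \quad\text{and}\quad \init(\pp_i+\pp_k)\;=\;\init \pp_i+\init \pp_k \quad \forall\, i,k.
\]
Each $\init \pp_i$ is radical, so write $\init \pp_i = \bigcap_{j} \qq_{ij}$ with $\qq_{ij}$ prime. Since Gröbner degeneration preserves Hilbert function and hence dimension, $\height(\init \pp_i)=\height(\pp_i)=\height(I)=:c$, so each $\qq_{ij}$ has height $c$. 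As $\init I = \bigcap_{i,j}\qq_{ij}$ with all $\qq_{ij}$ of the same height $c$, no containment can occur among them, so $\Min(\init I) = \{\qq_{ij}\}$.

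Next I would define a map $\pi:\Min(\init I)\to[s]$ by choosing, for each minimal prime $\qq$ of $\init I$, some index $\pi(\qq)=i$ with $\qq\supseteq\init \pp_i$. The central claim is:
\emph{If $\{\qq,\qq'\}$ is an edge of $G(\init I)$ with $\pi(\qq)=i$ and $\pi(\qq')=k$, then either $i=k$ or $\{i,k\}$ is an edge of $G(I)$.}
To verify this, note that $\qq+\qq' \supseteq \init \pp_i + \init \pp_k = \init(\pp_i+\pp_k)$, so
\[
c+1 \;=\; \height(\qq+\qq') \;\geq\; \height\bigl(\init(\pp_i+\pp_k)\bigr) \;=\; \height(\pp_i+\pp_k)\;\geq\; c.
\]
If $\height(\pp_i+\pp_k)=c$, then minimality of $\pp_i,\pp_k$ at that height forces $\pp_i=\pp_k$, i.e.\ $i=k$; otherwise $\height(\pp_i+\pp_k)=c+1$ and $\{i,k\}$ is an edge of $G(I)$.

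Finally, given $i,k\in[s]$, pick any $\qq,\qq'\in\Min(\init I)$ with $\pi(\qq)=i$, $\pi(\qq')=k$ (these exist by construction). A shortest path in $G(\init I)$ from $\qq$ to $\qq'$ has length at most $\diam(\init I)$; applying $\pi$ vertex-by-vertex and collapsing consecutive repetitions yields, by the claim, a walk in $G(I)$ from $i$ to $k$ of length at most $\diam(\init I)$. Hence the distance between $i$ and $k$ in $G(I)$ is at most $\diam(\init I)$, and taking the supremum over $i,k$ gives $\diam(I)\leq \diam(\init I)$.

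The only step needing real care is the height equality $\height(\init J)=\height(J)$ used on $J=\pp_i+\pp_k$, which follows from the Hilbert-function invariance of Gröbner degeneration. Once that is in hand, everything else is bookkeeping through Lemma \ref{Knu} and a straightforward containment argument; I do not expect a substantive obstacle.
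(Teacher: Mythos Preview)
Your argument is correct and follows essentially the same route as the paper: apply Lemma~\ref{Knu} to get $\init I=\bigcap_i\init\pp_i$ and $\init(\pp_i+\pp_k)=\init\pp_i+\init\pp_k$, decompose each $\init\pp_i$ into primes, and use the containment $\qq+\qq'\supseteq\init(\pp_i+\pp_k)$ to compare heights. The paper phrases the key step contrapositively (non-edge in $G(I)$ forces non-edge in $G(\init I)$) and then concludes tersely, whereas you state the forward implication and spell out the path-pullback via $\pi$ explicitly; these are the same argument.

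One small caveat: from $\height(\init\pp_i)=c$ you cannot conclude that \emph{every} minimal prime $\qq_{ij}$ of $\init\pp_i$ has height $c$, since $\init\pp_i$ need not be equidimensional. This does not affect your proof: you only use that $\Min(\init I)\subseteq\{\qq_{ij}\}$ (clear from $\init I=\bigcap_{i,j}\qq_{ij}$) and that $\pi$ is surjective (for each $i$, any height-$c$ minimal prime of $\init\pp_i$ lies in $\Min(\init I)$). The paper glosses over the same point.
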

\proof
By using Lemma \ref{Knu}, we have that 
\[\init I=\init \pp_1\cap \ldots \cap \init \pp_s.\]

Assume that, for $i=1,\ldots ,s$, the primary decomposition of $\init \pp_i$ is $\init \pp_i=\qq_{i,1}\cap \ldots \cap \qq_{i,s_i}$. 

We claim now that if $\{\pp_i,\pp_j\}$ is not an edge of $G(I)$, then $\{\qq_{i,h}, \qq_{j,k}\}$ is not an edge of $G(\init I)$ for every $h$ and $k$. 

Let $c=\height(I)$: If $\{\pp_i,\pp_j\}$ is not an edge of $G(I)$, then $\height(\pp_i+\pp_j)>c+1$, and so also $\height(\init(\pp_i+\pp_j))>c+1$. Using again Lemma \ref{Knu}, $\init(\pp_i+\pp_j)=\init \pp_i+\init \pp_j$. Since $\qq_{i,h}\supseteq \init \pp_i$ and $\qq_{j,k}\supseteq \init \pp_j$, we have that $\qq_{i,h}+\qq_{j,k}\supseteq \init \pp_i+\init \pp_j$ and hence
\[\height(\qq_{i,h}+\qq_{j,k})\geq \height(\init \pp_i+\init \pp_j)=\height(\init(\pp_i+\pp_j))>c+1.\] 

So, by definition, $\{\qq_{i_h}, \qq_{j_k}\}$ is not an edge of $G(\init I)$. If we take $\pp_i$ and $\pp_j$ as the two extreme vertices of a diameter of $G(I)$, it follows that $\qq_{i_h}$ and $\qq_{j_k}$ are two vertices of $G(\init I)$ having distance at least $\diam(I)$, and we get the thesis.
\endproof

In view of the above theorem, we feel the following is a natural question to ask:

\begin{question}
If $I\subset S$ and $\prec$ are such that $\init I$ is square-free, is it true that 
\[\diam(I)\leq \diam(\init I) \ ?\]
\end{question}

The assumption of Theorem \ref{comp} is not so frequent, however there is a natural class of ideals satisfying it. From now on either $\kk=\QQ$ or $\kk=\ZZ/p\ZZ$. Let $f\in S=\kk[x_1,\ldots,x_n]$, and consider the smallest set $\C_f$ of ideals of $S$ satisfying the following:
\begin{compactenum}
\item $(f)\in\C_f$;
\item If $I\in\C_f$, then $I:J\in\C_f$ for any $J\subseteq S$;
\item If $I,J\in\C_f$, then $I+J\in\C_f$ and $I\cap J\in\C_f$.
\end{compactenum} 

\begin{corollary}\label{cf}
Let $f\in S$ and $\prec$ a term order on $S$ such that $\init f$ is a square-free monomial. Then
\[\diam(I)\leq \diam(\init I) \ \ \ \forall \ I\in\C_f.\]
\end{corollary}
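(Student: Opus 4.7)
The plan is to deduce the corollary as a clean application of Theorem \ref{comp}. Given $I \in \C_f$ with primary decomposition $I = \pp_1 \cap \ldots \cap \pp_s$, I must verify the hypothesis of that theorem, namely that $\init(\bigcap_{i\in A}\pp_i)$ is radical for every $A\subseteq [s]$. I will do this by showing two things: (a) every ideal in $\C_f$ has a square-free initial ideal (so in particular is radical), and (b) if $I \in \C_f$, then each minimal prime $\pp_i$ belongs to $\C_f$ as well.

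For (a), this is the technical heart of the statement and is exactly the content of Knutson's theory of compatibly Frobenius-split ideals. When $\kk = \ZZ/p\ZZ$ and $\init f$ is a square-free monomial, a standard computation produces a Frobenius splitting $\varphi\colon S^{1/p}\to S$ for which $(f)$ is compatibly split; the collection of $\varphi$-compatibly split ideals is automatically closed under sums, intersections, and colons, so it contains all of $\C_f$. Every compatibly split ideal has a square-free initial ideal with respect to $\prec$ (one extracts the leading monomial of any element using the splitting), and in particular is radical. For $\kk = \QQ$ one reduces modulo a sufficiently general prime $p$, observing that both the minimal primary decomposition and the initial ideal are preserved under this reduction, and then applies the characteristic-$p$ statement.

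For (b), let $I \in \C_f$ with minimal primes $\pp_1,\ldots,\pp_s$ (which exist and describe $I$ exactly because $I$ is radical by (a)). By prime avoidance, for each fixed $i$ we can pick $g_i \in \bigcap_{j\ne i}\pp_j$ with $g_i\notin \pp_i$. A direct check shows $\pp_i = I : (g_i)$: any $h \in \pp_i$ satisfies $h g_i \in \pp_j$ for all $j$ and thus $hg_i \in I$; conversely, if $h g_i \in I \subseteq \pp_i$, then $h \in \pp_i$ since $g_i \notin \pp_i$. Because $\C_f$ is closed under colons by arbitrary subideals (property 2 in the definition), $\pp_i \in \C_f$. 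Closure under intersections (property 3) then gives $\bigcap_{i\in A}\pp_i \in \C_f$ for every $A \subseteq [s]$.

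Combining (a) and (b), $\init\bigl(\bigcap_{i\in A}\pp_i\bigr)$ is square-free, hence radical, for every $A \subseteq [s]$. Theorem \ref{comp} then yields $\diam(I)\leq \diam(\init I)$. The only substantive obstacle is (a): everything else is soft, but the Frobenius-splitting input (and its transfer from $\ZZ/p\ZZ$ to $\QQ$) is what restricts the ground field to $\QQ$ or $\ZZ/p\ZZ$ in the statement.
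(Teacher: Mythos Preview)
Your proof is correct and follows the same route as the paper: invoke Knutson's Frobenius-splitting results (your part (a)) to guarantee square-free initial ideals for everything in $\C_f$, and then apply Theorem~\ref{comp}. The paper's proof is a two-line citation of \cite[Theorems 2 and 4]{Kn} together with Theorem~\ref{comp}; you have simply unpacked the bridge between them, namely your part (b) showing that each minimal prime of $I$ (and hence each partial intersection) lies in $\C_f$ via the colon description $\pp_i = I:(g_i)$, so that the radicality hypothesis of Theorem~\ref{comp} is met.
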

\begin{proof}
If $\kk=\ZZ/p\ZZ$, then the thesis follows from \cite[Theorem 2]{Kn} and Theorem \ref{comp}, whereas if $\kk=\QQ$ it follows from \cite[Theorem 4]{Kn} and Theorem \ref{comp}.
\end{proof}

The following is an immediate consequence:

\begin{corollary}\label{ci}
If $I\subset S$ is such that $\init I$ is a square-free complete intersection for some term order $\prec$, then $I$ is Hirsch.
\end{corollary}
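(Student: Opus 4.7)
The plan is to reduce the statement to Corollary \ref{cf}, by exhibiting an element $f \in S$ whose initial term is a square-free monomial and such that $I$ belongs to $\C_f$.

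First I would unpack the structure of $\init I$. Let $c = \height(I) = \height(\init I)$, and let $m_1, \ldots, m_c$ be the minimal monomial generators of $\init I$. Since $(m_1,\ldots,m_c)$ is a complete intersection of square-free monomials in $S$, the supports of the $m_i$'s must be pairwise disjoint: a family of square-free monomials forms a regular sequence in $S$ only in that case. Moreover, the cardinality of a minimal Gröbner basis of $I$ equals the minimal number of generators of $\init I$, so there exist $f_1,\ldots,f_c \in I$ with $\init f_i = m_i$ and $I = (f_1,\ldots,f_c)$; in particular $I$ is itself a complete intersection.

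Next I would set $f := f_1 f_2 \cdots f_c$. By the disjointness of the supports, $\init f = m_1 m_2 \cdots m_c$ is still a square-free monomial. Since $S$ is a UFD, the identity $(a):(b) = (a/\gcd(a,b))$, together with the fact that $f/f_i$ divides $f$, yields $(f):(f/f_i) = (f_i)$ for each $i$. Starting from $(f) \in \C_f$, closure under colon ideals gives $(f_i) \in \C_f$ for every $i$, and closure under sums gives $I = (f_1)+\cdots+(f_c) \in \C_f$. Corollary \ref{cf} then yields $\diam(I) \leq \diam(\init I)$.

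Finally I would bound $\diam(\init I)$ combinatorially. Because the $m_k$'s are square-free monomials with pairwise disjoint supports, the minimal primes of $\init I$ are precisely the ideals $(x_{i_1},\ldots,x_{i_c})$ obtained by choosing one variable $x_{i_k} \in \mathrm{supp}(m_k)$ for each $k$. Two such minimal primes are adjacent in $G(\init I)$ exactly when they differ in a single coordinate, so $G(\init I)$ is the Cartesian product of complete graphs $K_{n_1}\mathbin{\Box}\cdots\mathbin{\Box}K_{n_c}$ with $n_k = |\mathrm{supp}(m_k)|$. Its diameter equals the number of indices $k$ with $n_k \geq 2$, and is in particular at most $c$. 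Combining the two inequalities gives $\diam(I) \leq \diam(\init I) \leq c = \height(I)$, so $I$ is Hirsch. The most delicate step is verifying $I \in \C_f$, which simultaneously requires the disjointness of the supports (to keep $\init f$ square-free) and the UFD colon computation to recover the individual generators $(f_i)$ from the principal ideal $(f)$; once this is in place, everything else reduces either to Corollary \ref{cf} or to a routine diameter estimate for a Cartesian product of complete graphs.
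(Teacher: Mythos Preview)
Your proof is correct and follows essentially the same route as the paper: set $f=f_1\cdots f_c$, observe that $\init f$ is square-free, invoke Corollary~\ref{cf} to get $\diam(I)\le\diam(\init I)$, and then note that square-free monomial complete intersections are Hirsch. You simply supply details the paper leaves implicit---the disjoint-supports observation, the colon-ideal verification that $I\in\C_f$, and the explicit Cartesian-product description of $G(\init I)$.
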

\begin{proof}
Since $\init I$ is a complete intersection, then $I$ is a complete intersection as well. By the assumptions, if $c=\height(I)$, we can choose $c$ polynomials $f_1,\ldots ,f_c\in S$ such that $I=(f_1,\ldots ,f_c)$ and $\init I=(\init f_1,\ldots ,\init f_c)$. Therefore $I\in \C_{f_1\cdots f_c}$, so Corollary \ref{cf} yields $\diam(I)\leq \diam(\init I)$.
Furthermore, $\init I$ is a complete intersection, and it is not difficult to see that monomial complete intersections are Hirsch.
\end{proof}

\begin{remark}
An equivalent formulation of the above corollary is the following: If $f_1,\ldots ,f_c\in S$ are such that $\init f_1\cdots \init f_c$ is a product of distinct variables, then $(f_1,\ldots ,f_c)$ is Hirsch. 

\end{remark}

\begin{example}
Let $X=(x_{ij})$ be an $m\times n$ matrix of indeterminates over $\kk$, and let $f_1,\ldots,f_{n-m+1}$ be the \textit{diagonal} $m$-minors: specifically, $f_i$ means the $m$-minor insisting on the columns from the $i$th to the $(i+m-1)$th. By choosing the lexicographic term order we are in the situation of Corollary \ref{ci}, so the ideal $(f_1,\ldots,f_{n-m+1})$ is Hirsch.
\end{example}

\end{document}